\newcommand{\dis}{\displaystyle}
\newtheorem{Theorem}{Theorem}[section]
\DeclareRobustCommand{\qed}{%
\ifmmode % if math mode, assume display: omit penalty etc.
\else \leavevmode\unskip\penalty9999 \hbox{}\nobreak\hfill \fi
\quad\hbox{\qedsymbol}}
\newcommand{\openbox}{\leavevmode
\hbox to.77778em{%
\hfil\vrule
\vbox to.675em{\hrule width.6em\vfil\hrule}%
\vrule\hfil}}
\newcommand{\qedsymbol}{\openbox}
\newcommand{\proofname}{Proof}
\newenvironment{proof}[1][\proofname]{\par\normalfont \trivlist \item[\hskip\labelsep   \itshape #1. ]
\ignorespaces
}{%
\qed\endtrivlist }
\begin{document}

\title{Zeros of linear combinations of Laguerre polynomials from different sequences}
%Zeros of linear combinations of Laguerre polynomials $L_n^\alpha$ and
%$L_m^{\alpha'}$,\newline $\alpha \neq \alpha'$, $m=n$ or $n-1$}

\author{Kathy Driver\thanks{Department of Mathematics and Applied
Mathematics, University of Cape Town, Private Bag X3, Rondebosch
7701, Cape Town, South Africa.  Research by this author is supported
by the National Research Foundation of South Africa under grant
number 2053730.} \and Kerstin Jordaan\thanks{Department of
Mathematics and Applied Mathematics, University of Pretoria,
Pretoria, 0002, South Africa. Research by this author is supported
by the National Research Foundation of South Africa under grant
number 2054423.}}
\date{}
\maketitle
\smallskip

\begin{center}
\end{center}
\medskip

\begin{abstract} We study interlacing properties of the zeros of
two types of linear combinations of Laguerre polynomials with
different parameters, namely $R_n=L_n^{\alpha}+aL_{n}^{\alpha'}$ and
$S_n=L_n^{\alpha}+bL_{n-1}^{\alpha'}$. Proofs and numerical
counterexamples are given in situations where the zeros of $R_n$,
and $S_n$, respectively, interlace (or do not in general) with the
zeros of $L_k^{\alpha}$, $L_k^{\alpha'}$, $k=n$ or $n-1$. The
results we prove hold for continuous, as well as integral, shifts of
the parameter $\alpha$.
\end{abstract}

\bigskip
\noindent AMS MOS Classification:\quad 33C45, 42C05.

\medskip

\noindent Keywords: \quad Laguerre polynomials, Zeros, Interlacing
properties, Linear combinations
\newpage

\section{Introduction}

\medskip
Let $\mu$ be a positive Borel measure supported on a finite or
infinite interval $[a, b]$ and let $\{p_n\}^\infty_{n=0}$ be the
sequence of polynomials, uniquely determined up to normalization,
orthogonal with respect to $\mu.$ Then $\dis\int^b_a x^k p_n (x) d
\mu (x) = 0$ for $k = 0, 1, \ldots, n - 1$ and it is well known that
the zeros of $p_n$ are real and simple and lie in $(a, b)$.
Moreover, if $a < x_1 < x_2 < \ldots < x_n < b$ and $a < y_1 < y_2 <
\ldots < y_{n-1} < b$ are the zeros of $p_n$ and $p_{n-1}$
respectively, then

$$a < x_1 < y_1 < x_2 < y_2 < \ldots < x_{n-1} < y_{n-1} < x_n <
b,$$ a property usually called the interlacing of the zeros of $p_n$
and $p_{n-1}.$

\medskip
The interlacing of zeros of polynomials is particularly important in
numerical quadrature (cf. \cite{Shohat}) and also arises, inter
alia, in the context of extremal Zolotarev-Markov problems (cf.
\cite{Grinshpun}), the Korous-Peebles problem (cf. \cite{Uvarov})
and the Gelfond interpolation problem (cf. \cite{Gelfond}).

\medskip
In \cite{Marcellan}, Alfaro, Marcell\'{a}n, Pe\~{n}a and Rezola
derived necessary and sufficient conditions for the orthogonality of
$\{Q_n\}^\infty_{n=0},$ where $Q_n (x) = p_n (x) + a_1 p_{n-1} (x) +
\dots + a_k p_{n-k} (x),~ a_k \neq 0,~ n \geq k$ and
$\{p_n\}^\infty_{n=0}$ is a sequence of monic orthogonal
polynomials. Their work extends the results of Peherstorfer (cf.
\cite{Peters}) who established sufficient conditions, when
supp$(\mu) = (- 1, 1)$, on the real numbers $\{a_j\}^k_{j=1}$ such
that $p_n + a_1 p_{n-1} + \ldots + a_k p_{n-k}$ has  $n$ simple
zeros in $(-1, 1).$ Marcell\'{a}n raised a more general question at
OPSFA 2007: Given two different orthogonal sequences
$\{p_n\}^\infty_{n=0}$ and $\{q_n\}^\infty_{n=0}$, under what
conditions does a linear combination $r_n = p_n + a q_n,~ a \neq 0,$
form an orthogonal sequence $\{r_n\}^{\infty}_{n=0}$? A related
question, relevant for applications, is whether and when the zeros
of $r_n$ interlace with the zeros of $p_n,~p_{n-1},~q_n$ or
$q_{n-1}$. One starting point for answering these general questions
is to consider linear combinations of classical orthogonal
polynomials from different sequences but from the same family.

\medskip
In this paper, we consider linear combinations of Laguerre
polynomials $L_n^\alpha$ of the form
$R_n^{\alpha,t}=L_n^{\alpha}+a~L_n^{\alpha+t}$ and
$S_n^{\alpha,t}=L_n^{\alpha}+b~L_{n-1}^{\alpha+t}$ where $\alpha
>-1$, $t>0$ and $a,~b\neq0$. We recall that the Laguerre polynomials
(cf. \cite{Szego}) are orthogonal with respect to the weight
function $e^{-x}x^{\alpha},~\alpha>-1$, on the interval
$(0,\infty)$.

\medskip For $0<t\leq2$, we give proofs (or counterexamples) for the
interlacing of the zeros of $R_n^{\alpha,t}$ and $S_n^{\alpha,t}$
with the zeros of $L_n^{\alpha},~ L_n^{\alpha+t}, ~L_{n-1}^{\alpha}$
and $L_{n-1}^{\alpha+t}$.

\medskip
We will make use of two well known identities (cf. \cite{AbSt},
22.7.30 and 22.7.29)
\begin{eqnarray}\label{AbSt1} L_n^{\alpha} &=&
L_n^{\alpha+1} - L_{n-1}^{\alpha+1}
\\
\label{AbSt2}~\mbox{and}~ xL_n^{\alpha+1}&=&(x-n)L_n^\alpha
+(\alpha+n)L_{n-1}^\alpha\end{eqnarray}

\section{Linear combinations of Laguerre polynomials of the same degree}

Let
\begin{equation}\label{R}R_n^{\alpha,t}=L_n^{\alpha}+aL_n^{\alpha+t},~a\neq0,~\alpha>-1.\end{equation}
\begin{Theorem}
For $0<t\leq2$, the zeros of $R_n^{\alpha,t}$ interlace with the
zeros of (i)$L_n^{\alpha}$, (ii) $L_n^{\alpha+t}$.

%Let $\alpha>-1$, $t\in(0,2]$, $a>0$ and let
%$r_n^{\alpha}=L_n^{\alpha}+aL_n^{\alpha+t}$. Let
%\begin{eqnarray*}0<x_1<x_2<\dots<x_n&\mbox{ denote the zeros of
%}&L_n^\alpha\\0<t_1<t_2<\dots<t_n&\mbox{ denote the zeros of
%}&L_n^{\alpha+t}\\\mbox{ and ~}r_1<r_2<\dots<r_n&\mbox{ denote the
%zeros of }&r_n^\alpha\,\end{eqnarray*} then \begin{itemize}
%\item[1.]
%$0<x_1<r_1<x_2<\ldots<r_{n-1}<x_n<r_n$
%\item[2.] $r_1<t_1<r_2<\dots<t_{n-1}<r_n<t_n$
%\end{itemize}
\end{Theorem}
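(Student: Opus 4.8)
The plan is to deduce both parts from a single fact: that for $0<t\le 2$ the zeros of $L_n^{\alpha}$ and $L_n^{\alpha+t}$, two polynomials of the \emph{same} degree $n$, strictly interlace. Granting this, I would argue as follows. Strict interlacing of two degree-$n$ real polynomials is equivalent to their Wronskian $L_n^{\alpha}(L_n^{\alpha+t})'-(L_n^{\alpha})'L_n^{\alpha+t}$ having no real zero, and this in turn makes the whole real pencil $\{L_n^{\alpha}+cL_n^{\alpha+t}\}$ a family of degree-$n$ polynomials whose zeros are real and mutually interlace. Since $L_n^{\alpha}$ (the member $c=0$), $L_n^{\alpha+t}$ (the member at $c=\infty$) and $R_n^{\alpha,t}=L_n^{\alpha}+aL_n^{\alpha+t}$ (the member $c=a$) all belong to this pencil, the zeros of $R_n^{\alpha,t}$ interlace with those of $L_n^{\alpha}$ and with those of $L_n^{\alpha+t}$ simultaneously, proving (i) and (ii) at once. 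Equivalently and more concretely, at a zero $x_i$ of $L_n^{\alpha}$ one has $R_n^{\alpha,t}(x_i)=aL_n^{\alpha+t}(x_i)$, and the interlacing fact forces $L_n^{\alpha+t}(x_i)$ to alternate in sign with $i$, which lands one zero of $R_n^{\alpha,t}$ in each gap $(x_i,x_{i+1})$.

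The heart of the matter is therefore the equal-degree interlacing of $L_n^{\alpha}$ and $L_n^{\alpha+t}$, which I would build up from the identities \eqref{AbSt1}, \eqref{AbSt2}. For $t=1$, evaluate \eqref{AbSt2} at a zero $x_i$ of $L_n^{\alpha}$ to get $x_iL_n^{\alpha+1}(x_i)=(\alpha+n)L_{n-1}^{\alpha}(x_i)$; since $x_i>0$ and $\alpha+n>0$, the sign of $L_n^{\alpha+1}(x_i)$ equals that of $L_{n-1}^{\alpha}(x_i)$, which alternates because $L_{n-1}^{\alpha}$ and $L_n^{\alpha}$ interlace (the classical consecutive-degree interlacing). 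Hence $L_n^{\alpha+1}$ interlaces with $L_n^{\alpha}$. For $t=2$, combine \eqref{AbSt1} and \eqref{AbSt2} (with $\alpha$ replaced by $\alpha+1$, eliminating $L_{n-1}^{\alpha+1}$) to obtain $xL_n^{\alpha+2}=(x+\alpha+1)L_n^{\alpha+1}-(\alpha+1+n)L_n^{\alpha}$; evaluated at $x_i$ this gives $\operatorname{sign}L_n^{\alpha+2}(x_i)=\operatorname{sign}L_n^{\alpha+1}(x_i)$, so the $t=2$ interlacing follows from the $t=1$ case.

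To pass from the integer shifts $t=1,2$ to the full continuous range $0<t\le 2$ I would invoke Markov's monotonicity theorem: since $\partial_{\alpha}\log(x^{\alpha}e^{-x})=\log x$ is increasing in $x$, each zero $x_k(\beta)$ of $L_n^{\beta}$ is a strictly increasing function of the parameter $\beta$. Writing $\xi_k(t)$ for the zeros of $L_n^{\alpha+t}$, monotonicity gives $x_k(\alpha)<\xi_k(t)\le \xi_k(2)$ for $0<t\le2$, while the proven $t=2$ interlacing gives $\xi_k(2)<x_{k+1}(\alpha)$; hence $x_k(\alpha)<\xi_k(t)<x_{k+1}(\alpha)$ for every $k$ and every $0<t\le 2$, which is exactly the required strict interlacing.

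The main obstacle, and the place where the hypothesis $t\le 2$ is essential, is precisely this step: the clean algebraic sign computation produces only the \emph{integer} endpoints $t=1,2$, and bridging to non-integer $t$ forces the analytic input of monotonicity of the zeros; beyond $t=2$ the $i$-th zero of $L_n^{\alpha+t}$ can overtake $x_{i+1}(\alpha)$ and interlacing breaks (which is why the paper must supply counterexamples there). A final minor bookkeeping point: when $a=-1$ the leading terms of $R_n^{\alpha,t}$ cancel and its degree drops to $n-1$, so one should either exclude this value or interpret interlacing with the $n-1$ remaining zeros; for $a\ne-1$ one has $\deg R_n^{\alpha,t}=n$ and the pencil argument places all $n$ zeros, with the sign of $a$ only deciding whether the extra zero sits to the left of $x_1$ or to the right of $x_n$.
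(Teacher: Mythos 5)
Your proposal is correct and its core step is exactly the paper's: evaluate $R_n^{\alpha,t}$ at consecutive zeros of $L_n^{\alpha}$ (resp.\ $L_n^{\alpha+t}$) and use the equal-degree interlacing of $L_n^{\alpha}$ and $L_n^{\alpha+t}$ for $0<t\le 2$ to force a sign change, hence a zero of $R_n^{\alpha,t}$, in each gap. The only difference is that the paper imports that interlacing wholesale from Theorem 2.3 of \cite{DrJo}, whereas you reprove it from \eqref{AbSt1}--\eqref{AbSt2} for $t=1,2$ and bridge to non-integer $t$ via Markov monotonicity of the zeros in $\alpha$ (making the argument self-contained), and your caveat about the degree drop at $a=-1$ flags a degenerate case the paper silently ignores.
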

\begin{proof}
We know from \cite{DrJo}, Theorem 2.3 that the zeros of $L_n^\alpha$
interlace with the zeros of $L_n^{\alpha+t}$ for $0 <t\leq2$ which
implies that $L_n^{\alpha}$ has a different sign at successive zeros
of $L_n^{\alpha+t}$ and vice versa. Evaluating (\ref{R}) at
succesive zeros $x_i$ and $x_{i+1}$ of $L_n^{\alpha}$ we obtain
\begin{eqnarray*}R_n^{\alpha,t}(x_i)R_n^{\alpha,t}(x_{i+1})&=&a^2L_n^{\alpha+t}(x_i)L_n^{\alpha+t}(x_{i+1})~,
1=1,2,\dots.n-1\\&<&0~\mbox{for all}~a\neq0.\end{eqnarray*}
Therefore $R_n^{\alpha,t}$ has a different sign at successive zeros
of $L_n^\alpha$ and so the zeros interlace. The same argument shows
that the zeros of $R_n^{\alpha,t}$ interlace with those of
$L_n^{\alpha+t}$ by evaluating (\ref{R}) at successive zeros of
$L_n^{\alpha+t}$.
\end{proof}
{\bf Remark:} For the integer values $t=1$ and $t=2$,
$R_n^{\alpha,1}$ and $R_n^{\alpha,2}$ are in fact each a linear
combination of orthogonal polynomials from the same sequence.
Indeed, using the identity (\ref{AbSt1}), we see that
\begin{equation}\label{R1}
R_n^{\alpha,1} = (a+1)L_n^{\alpha+1} -
L_{n-1}^{\alpha+1},\end{equation} and the restrictions on $a$ to
ensure that $\{R_n^{\alpha,1}\}_{n=0}^{\infty}$ has all its zeros in
$(0,\infty)$ can be deduced from (\cite{Brez}, Theorem 3(v)).
Similarly, applying (\ref{AbSt1}) iteratively, we obtain
\begin{equation}\label{R2} R_n^{\alpha,2}= (a+1)L_n^{\alpha+2}-
2L_{n-1}^{\alpha+2}+L_{n-2}^{\alpha+2}\end{equation} and the zeros
of this type of linear combination are discussed in (\cite{Brez},
Theorem 5, and \cite{Shohat}).

\medskip
Evaluating (\ref{R1}) at successive zeros of $L_{n-1}^{\alpha+1}$ ,
one can also prove that the zeros of $R_n^{\alpha,1}$ interlace with
the zeros of $L_{n-1}^{\alpha+1}$. However, the zeros of
$R_n^{\alpha,t}$ do not interlace with the zeros of
$L_{n-1}^{\alpha}$ even in the simple special case when $t=1$, as
illustrated by the following example: For $n=5$, $a=2.33$,
$\alpha=1.45$ and $t=1$, the zeros of $L_{4}^{1.45}$ are
\[\{0.954365,~2.94834,~6.26071,~11.6366\}\] while those of
$R_5^{1.45,1}$ are
\[\{1.17057,~3.01797,~5.80288,~9.83574,~15.9213\}.\]

\section{Linear combinations of Laguerre polynomials of different degree}

Next we consider linear combinations of the type
\begin{equation}\label{S}S_n^{\alpha,t}=L_n^{\alpha}+b~L_{n-1}^{\alpha+t},~b\neq0,~\alpha>-1.\end{equation}
We will need information on the interlacing properties of the two
polynomials $L_n^{\alpha}$ and $L_{n-1}^{\alpha+t}$ in the linear
combination.

\begin{Theorem}\label{L} Let $\alpha
> -1$ and let
\begin{eqnarray*} 0< x_1 < x_2 <\ldots< x_n&\mbox{be the zeros
of}&L_n^\alpha ,\\
0<y_1 < y_2 <\dots< y_{n-1}&\mbox{be the zeros of}&L_{n-1}^{\alpha},\\
0<t_1<t_2<\ldots<t_{n-1}&\mbox{be the zeros of} &L_{n-1}^{\alpha+t}\mbox{ and }\\
0 < X_1 < X_2 <\ldots< X_{n-1}&\mbox{be the zeros
of}&L_{n-1}^{\alpha +2}\end{eqnarray*}  where $0<t<2$. Then
\[0<x_1<y_1<t_1 <
X_1<x_2<\ldots<x_{n-1}<y_{n-1}<t_{n-1}<X_{n-1}<x_n.\]
\end{Theorem}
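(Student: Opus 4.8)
The plan is to build the full chain one "slot" at a time: between consecutive zeros $x_i$ and $x_{i+1}$ of $L_n^\alpha$ I will place exactly one zero of each of the three lower-degree polynomials, in the order $y_i<t_i<X_i$. The backbone is the classical interlacing of consecutive orthogonal polynomials of equal parameter, which gives $0<x_1$ and $x_i<y_i<x_{i+1}$ for $i=1,\dots,n-1$; this already fixes the positions of the zeros $y_i$ of $L_{n-1}^{\alpha}$ inside the slots.

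The key step — and the main obstacle — is to locate the zeros $X_i$ of $L_{n-1}^{\alpha+2}$ relative to those of $L_n^\alpha$. Monotonicity in the parameter only pushes zeros to the right, so it cannot by itself bound $X_i$ above by $x_{i+1}$; for that I need a direct relation between $L_{n-1}^{\alpha+2}$ and $L_n^\alpha$. Combining the two identities (\ref{AbSt1}) and (\ref{AbSt2}) — first eliminating $L_n^{\alpha+1}$ to get $xL_{n-1}^{\alpha+1}=(\alpha+n)L_{n-1}^{\alpha}-nL_n^{\alpha}$, then repeating this step with $\alpha$ replaced by $\alpha+1$ and substituting back — I expect to reach a mixed relation of the form
\[ x^2 L_{n-1}^{\alpha+2} = -n(x+\alpha+1)L_n^{\alpha} + (\alpha+1)(\alpha+n)L_{n-1}^{\alpha}. \]
Evaluating this at two consecutive zeros $x_i,x_{i+1}$ of $L_n^\alpha$ annihilates the first term, leaving $x_i^2\,L_{n-1}^{\alpha+2}(x_i)=(\alpha+1)(\alpha+n)L_{n-1}^{\alpha}(x_i)$. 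Since $(\alpha+1)(\alpha+n)>0$ for $\alpha>-1$, the sign of $L_{n-1}^{\alpha+2}$ at each $x_i$ matches that of $L_{n-1}^{\alpha}$, which alternates across the $x_i$ by the interlacing already recorded. Hence $L_{n-1}^{\alpha+2}$ changes sign in each of the $n-1$ intervals $(x_i,x_{i+1})$; as it has only $n-1$ zeros, a counting argument forces exactly one zero $X_i$ into each slot (and none into $(0,x_1)$ or $(x_n,\infty)$), giving $x_i<X_i<x_{i+1}$.

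Finally, the intermediate ordering $y_i<t_i<X_i$ follows from the strict monotonicity of each Laguerre zero as a function of the parameter (classical, cf.\ \cite{Szego}), applied to the degree-$(n-1)$ polynomials with parameters $\alpha<\alpha+t<\alpha+2$; the strictness at both ends is exactly where the hypothesis $0<t<2$ is used. Since $y_i$ and $X_i$ both lie in $(x_i,x_{i+1})$, the squeezed zero $t_i$ lies there as well, so chaining $x_i<y_i<t_i<X_i<x_{i+1}$ over $i=1,\dots,n-1$ produces the asserted inequality. (Alternatively, $y_i<t_i$ and $t_i<X_i$ can be read off from the interlacing of $L_{n-1}^{\alpha}$ with $L_{n-1}^{\alpha+t}$ and of $L_{n-1}^{\alpha+t}$ with $L_{n-1}^{\alpha+2}$ from \cite{DrJo}, Theorem 2.3, since both $t$ and $2-t$ lie in $(0,2)$.) I anticipate the derivation and verification of the displayed mixed identity to be the only genuinely delicate point, the rest being sign bookkeeping and zero counting.
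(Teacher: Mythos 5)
Your argument is correct, and the displayed mixed identity does check out: eliminating $L_n^{\alpha+1}$ between (\ref{AbSt1}) and (\ref{AbSt2}) gives $xL_{n-1}^{\alpha+1}=(\alpha+n)L_{n-1}^{\alpha}-nL_n^{\alpha}$, and iterating exactly as you describe yields $x^2L_{n-1}^{\alpha+2}=-n(x+\alpha+1)L_n^{\alpha}+(\alpha+1)(\alpha+n)L_{n-1}^{\alpha}$. The overall strategy is the same as the paper's (evaluate a three-term mixed relation at consecutive zeros of $L_n^{\alpha}$, read off a sign alternation to place the $X_i$, then use monotonicity of the zeros in the parameter to squeeze in $y_i<t_i<X_i$), but your key identity is different: the paper instead derives $(\alpha+1)L_n^{\alpha+1}=(\alpha+n+1)L_n^{\alpha}+xL_{n-1}^{\alpha+2}$ and concludes that $L_{n-1}^{\alpha+2}(x_k)L_{n-1}^{\alpha+2}(x_{k+1})<0$ because the zeros of $L_n^{\alpha}$ and $L_n^{\alpha+1}$ interlace, a fact it imports from \cite{DrJo}, Theorem 2.3. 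Your decomposition expresses $L_{n-1}^{\alpha+2}$ through $L_n^{\alpha}$ and $L_{n-1}^{\alpha}$ only, so the sign alternation you need is just the classical interlacing of consecutive polynomials in a single orthogonal sequence; this makes your placement of the $X_i$ self-contained and slightly more elementary, at the price of a longer identity derivation and the (correctly handled) observation that $(\alpha+1)(\alpha+n)>0$ for $\alpha>-1$, $n\geq1$. You are also more explicit than the paper about the counting step (exactly $n-1$ sign changes versus exactly $n-1$ zeros), which the paper leaves implicit. The final assembly, including the role of $0<t<2$ in the strict inequalities $y_i<t_i<X_i$, matches the paper.
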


\begin{proof} A simple computation using (\ref{AbSt1}) and
(\ref{AbSt2}) leads to
\begin{equation}\label{lc}(\alpha+1)L_n^{\alpha+1}(x) =
(\alpha+n+1)L_n^{\alpha} (x) + x L_{n-1}^{\alpha+2}
(x).\end{equation} Evaluating (\ref{lc}) at successive zeros $x_k$
and $x_{k+1}$ of $L_n^\alpha(x)$, we obtain
\begin{eqnarray*}x_kx_{k+1}L_{n-1}^{\alpha+2}(x_k)
L_{n-1}^{\alpha+2} (x_{k+1})& = &(\alpha+1)^2L_{n}^{\alpha+1} (x_k)
L_{n}^{\alpha+1} (x_{k+1}).\end{eqnarray*} The expression on the
right is negative since the zeros of $L_n^{\alpha}$ and
$L_n^{\alpha+1}$ interlace (cf. \cite{DrJo}, Theorem 2.3) and
therefore
\[ x_k<X_{k}<x_{k+1}\mbox{ for each fixed }k,~~k=1,\ldots,n-1.\]
The zeros of $L_{n-1}^{\alpha}$ increase as $\alpha$ increases (cf.
\cite{Szego}, p. 122), hence
\[y_k<t_k<X_k \mbox{ for each fixed }k,~~ k=1,\ldots,n.\]
Finally, since the zeros of $L_n^{\alpha}$ and $L_{n-1}^{\alpha}$
separate each other, we know that \[x_k<y_k<x_{k+1}\mbox{ for each
fixed }k,~~k=1,\ldots,n-1\] and this completes the proof.
\end{proof}

Note that this result extends Theorem 2.4 in \cite{DrJo} to the case
of polynomials of different degree with continuously varying
parameters.

\begin{Theorem}
For $0<t\leq2$, the zeros of $S_n^{\alpha,t}$ interlace with the
zeros of (i)$L_n^{\alpha}$,~~ (ii) $L_{n-1}^{\alpha+t}$.
\end{Theorem}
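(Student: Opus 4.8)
The plan is to reuse the sign-change technique that settled the same-degree case for $R_n^{\alpha,t}$, now fed with the fine interlacing data from Theorem~\ref{L}. Write $S_n=S_n^{\alpha,t}$ throughout, let $x_1<\cdots<x_n$ be the zeros of $L_n^\alpha$, and let $t_1<\cdots<t_{n-1}$ be the zeros of $L_{n-1}^{\alpha+t}$ (for $t=2$ these are the $X_k$ of Theorem~\ref{L}). In either case the crucial consequence of Theorem~\ref{L} is that each $t_k$ sits in a distinct consecutive gap, $t_k\in(x_k,x_{k+1})$ for $k=1,\dots,n-1$.

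For part (i) I would evaluate (\ref{S}) at two consecutive zeros $x_i,x_{i+1}$ of $L_n^\alpha$. Since $L_n^\alpha(x_i)=0$ this gives $S_n(x_i)=b\,L_{n-1}^{\alpha+t}(x_i)$, hence $S_n(x_i)S_n(x_{i+1})=b^2\,L_{n-1}^{\alpha+t}(x_i)L_{n-1}^{\alpha+t}(x_{i+1})$. Because $t_i\in(x_i,x_{i+1})$ is the unique zero of $L_{n-1}^{\alpha+t}$ in that gap, $L_{n-1}^{\alpha+t}$ changes sign there, so the product is negative for every $b\neq0$. Thus $S_n$ has a zero in each of the $n-1$ gaps $(x_i,x_{i+1})$; as $\deg S_n=n$ with real coefficients the remaining zero is also real, and a parity count (distributing $n$ zeros among $n-1$ gaps each containing an odd number is impossible) forces that last zero outside $(x_1,x_n)$. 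This is the asserted interlacing, and it records that all $n$ zeros of $S_n$ are real and simple.

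For part (ii) I would evaluate (\ref{S}) at consecutive zeros $t_k,t_{k+1}$ of $L_{n-1}^{\alpha+t}$, where $S_n$ reduces to $L_n^\alpha$, giving $S_n(t_k)S_n(t_{k+1})=L_n^\alpha(t_k)L_n^\alpha(t_{k+1})$. Since $t_k\in(x_k,x_{k+1})$ and $t_{k+1}\in(x_{k+1},x_{k+2})$ lie in adjacent gaps of $L_n^\alpha$, the two values have opposite sign, so $S_n$ has a zero in each of the $n-2$ intervals $(t_k,t_{k+1})$. It then remains to place the two outermost zeros. The leading coefficient of $S_n$ is that of $L_n^\alpha$, namely $(-1)^n/n!$, so $S_n(x)$ has sign $(-1)^n$ for large $x$; comparing this with $S_n(t_{n-1})=L_n^\alpha(t_{n-1})$ and noting that $x_n$ is the only zero of $L_n^\alpha$ beyond $t_{n-1}$, the signs differ, so $S_n$ has a zero in $(t_{n-1},\infty)$. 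Combined with part (i), $S_n$ has exactly $n$ real zeros; having located $n-1$ of them in $(t_1,\infty)$, the last must satisfy $s_1<t_1$, yielding $s_1<t_1<s_2<\cdots<t_{n-1}<s_n$.

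The step I expect to be the real obstacle is locating these two extreme zeros, especially the one below $t_1$: the sign of $S_n$ at $0$ is $L_n^\alpha(0)+b\,L_{n-1}^{\alpha+t}(0)$, which depends on $b$, so a naive sign-at-the-boundary argument breaks down. The clean way past this is the parity/counting argument: the $n-1$ regions $(t_1,t_2),\dots,(t_{n-2},t_{n-1}),(t_{n-1},\infty)$ each contain an odd number of zeros of $S_n$, and a sum of $n-1$ odd numbers cannot equal $n$; hence not all $n$ zeros lie in $(t_1,\infty)$, exactly one lies below $t_1$, and this forces precisely one zero per region, completing the interlacing.
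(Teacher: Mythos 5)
Your proof is correct and takes essentially the same route as the paper: evaluate $S_n^{\alpha,t}$ at consecutive zeros of $L_n^{\alpha}$ (respectively of $L_{n-1}^{\alpha+t}$) and use the interlacing supplied by Theorem~\ref{L} to force a sign change, hence a zero of $S_n^{\alpha,t}$, in each gap. The only difference is that you make explicit the counting and parity step that locates the outermost zeros, which the paper's proof leaves implicit.
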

\begin{proof}
We know from Theorem \ref{L} that the zeros of $L_n^\alpha$
interlace with the zeros of $L_{n-1}^{\alpha+t}$ for $0 <t\leq2$
which implies that $L_{n-1}^{\alpha+t}$ has a different sign at
successive zeros of $L_{n}^{\alpha}$ and vice versa. Evaluating
(\ref{S}) at successive zeros $x_i$ and $x_{i+1}$ of $L_n^{\alpha}$
we obtain
\begin{eqnarray*}S_n^{\alpha,t}(x_i)S_n^{\alpha,t}(x_{i+1})&=&b^2L_{n-1}^{\alpha+t}(x_i)L_{n-1}^{\alpha+t}(x_{i+1})~,
i=1,2,\dots.n-1\\&<&0~\mbox{for all}~b\neq0.\end{eqnarray*}
Therefore $S_n^{\alpha,t}$ has a different sign at successive zeros
of $L_n^\alpha$ and so the zeros interlace. The same argument shows
that the zeros of $S_n^{\alpha,t}$ interlace with those of
$L_{n-1}^{\alpha+t}$ by evaluating (\ref{S}) at successive zeros of
$L_{n-1}^{\alpha+t}$.
\end{proof}

It is interesting to note that in the case of linear combinations of
Laguerre polynomials of different degree, the zeros of
$S_n^{\alpha,t}$ do not interlace with the zeros of
$L_{n-1}^{\alpha}$. Indeed, even in the simplest case when $t=1$ and
$n=5$, $b=2.33$, $\alpha=1.45$ in (\ref{S}), the zeros of
$S_5^{1.45,1}$ are
\[\{1.34638,~3.48132,~6.74108,~11.6384,~20.6928\}\]
while those of $L_{4}^{1.45}$ are
\[\{0.954365,~2.94834,~6.26071,~11.6366\},\] and interlacing does not occur. The zeros of
$S_n^{\alpha,t}$ and $L_n^{\alpha+t}$ are interlacing when $t=1$
since $S_n^{\alpha,1}=L_n^{\alpha+1}+(b-1)L_{n-1}^{\alpha+1}$.
However, when $t=2$, the zeros of $S_5^{1.45,2}$ are
\[\{1.94417,~ 4.47751,~ 8.08954,~ 12.6085,~ 16.7802\}\]
while those of $L_{5}^{1.45+2}$ are
\[\{1.70945,~ 3.92167,~ 7.07942,~ 11.5061,~ 18.0334\},\]
and interlacing fails in this case.

\medskip
\begin{tabbing}
e-mail addresses: \= Kathy.Driver@uct.ac.za\\
\> kjordaan@up.ac.za
\end{tabbing}

\end{document}